\begin{document}
\renewcommand{\emptyset}{\varnothing}
\newtheorem{theorem}{Theorem}
\newtheorem{conjecture}[theorem]{Conjecture}
\newtheorem{proposition}[theorem]{Proposition}
\newtheorem{question}[theorem]{Question}
\newtheorem{lemma}[theorem]{Lemma}
\newtheorem{cor}[theorem]{Corollary}
\newtheorem{obs}[theorem]{Observation}
\newtheorem{proc}[theorem]{Procedure}
\newtheorem{defn}[theorem]{Definition}
\newcommand{\comments}[1]{} 
\def\Z{\mathbb Z}
\def\Za{\mathbb Z^\ast}
\def\Fq{{\mathbb F}_q}
\def\R{\mathbb R}
\def\N{\mathbb N}
\def\cH{\overline{\mathcal H}}
\def\cF{\mathcal F}

\title[A Conjecture on Visible Points]{A Conjectural Inequality for Visible Points in Lattice Parallelograms }

\author{Gabriel Khan}
\address{GK: University of Michigan}
\email{gabekhan@umich.edu}
\author{Mizan R. Khan}
\address{MRK: Department of Mathematics and Computer Science, Eastern Connecticut State University, Willimantic, CT 06226}
\email{khanm@easternct.edu}
\author{Joydip Saha}
\address{JS: Narayanganj, Bangladesh}
\email{joydip.t.saha@gmail.com}
\author{Peng Zhao}
\address{PZ: Department of Mathematics and Computer Science, Indiana State University, Terre Haute, IN 47809}
\email{peng.zhao@indstate.edu}

\date{}

\maketitle

\begin{abstract}
Let $a,n \in \Z^+$, with $a<n$ and $\gcd(a,n)=1$. Let $P_{a,n}$ denote the lattice parallelogram spanned by $(1,0)$ and $(a,n)$, that is,
$$P_{a,n} = \left\{ t_1(1,0)+ t_2(a,n) \, : \, 0\leq t_1,t_2 \leq 1 \right\}, $$
and let 
$$V(a,n) = \# \textrm{ of visible lattice points in the interior of } P_{a,n}.$$ 
In this paper we prove some elementary (and straightforward) results for $V(a,n)$. The most interesting aspects of the paper are in Section 5 where we discuss some numerics and display some graphs of $V(a,n)/n$. (These graphs resemble an integral sign that has been rotated counter-clockwise by $90^\circ$.) The numerics and graphs suggest the conjecture
that for $a\not= 1, n-1$, $V(a,n)/n$ satisfies the inequality 
$$ 0.5 < V(a,n)/n< 0.75.$$
\end{abstract}

\section{Introduction and Notation}

In this short paper we describe an ongoing project on visible points in empty lattice parallelograms. We start with some definitions and notation.

A lattice point $(u_1,u_2) \in \Z^2$ is said to be \emph{visible} (from the origin) if $\gcd(u_1,u_2)=1$. Following Apostol~\cite[Theorem 3.9, page 63]{Ap}, for $r >0$ let $N(r)$ denote the number of lattice points in the square $[-r,r]^2$, and let $N^\prime(r)$ denote the number of the lattice points in 
this square that are visible, then
\begin{equation}
 \lim_{r \rightarrow \infty} \frac{N^\prime(r)}{N(r)} = \frac{6}{\pi^2}\,;
\end{equation}
that is, if you pick two integers at random, then there is just over a 60\% chance that they are relatively prime. 

In this paper we count the number of visible lattice points lying inside lattice parallelograms  of a certain form. Specifically, lattice parallelograms that satisfy the following two conditions : (i) other than the four vertices, there are no other lattice points on the boundary of the parallelogram; (ii) one of the vertices is the origin. Such a parallelogram is of the form 
$$ P= \{t_1 \vec{u} + t_2\vec{v} \, : \, 0\leq t_1,t_2 \leq 1 \},$$ 
where 
$\vec{u}, \vec{v} \in \Z^2$ are two visible lattice points that are linearly independent. (Following~\cite{Rez} we call 
a lattice polygon a  \emph{clean} lattice polygon if the only lattice points on its boundary are the vertices.) Our initial goal was to determine the asymptotic estimates that arose in counting the number of visible points lying inside $P$. We 
elaborate on this goal in Section~2.1.

We will use $P_{a,n}$ to denote 
the clean parallelogram with vertices $(0,0),(1,0),$ $(a,n)$ and $(a+1,n)$, where $1\leq a < n$ and $\gcd(a,n) =1$. For a lattice parallelogram, $P$ (with one vertex at the origin) let 
$$V(P)=  \# \textrm{ of visible lattice points in the interior of } P.$$
In the special case when $P=P_{a,n}$, we simply write $V(a,n),$  instead of $V\left(P_{a,n}\right)$.

\section{A Reduction Result}

We begin by showing that questions about visible lattice points inside a clean lattice parallelogram (with one vertex at the origin) can be reduced to studying the visible lattice points inside the appropriate $P_{a,n}$. We also describe the minimum and maximum value of 
$V(P)$. We prove our result via unimodular transformations. These are linear transformations 
$T:\R^n \rightarrow \R^n$ with the key property that they preserve lattice points, that is, $T(\Z^n) = \Z^n$. Furthermore, unimodular transformations preserve visibility.

\begin{defn} A $n\times n$ matrix $M$ is said to be unimodular if $M\in \mathbb{M}_n(\Z)$ with $\det(M) =\pm1$.
A linear transformation $T:\R^n \rightarrow \R^n$ is unimodular if it can be represented by a unimodular matrix.
\end{defn}

\begin{theorem}\label{extreme-case}
Let $P$ be the clean lattice parallelogram spanned by the lattice points $\vec{u},\vec{v}$ with $\textrm{area}(P) =n$. Then there is an unimodular transformation 
$T: \R^2 \rightarrow \R^2$ such that 
$$T(P) = P_{a,n} \textrm{ with }1\leq a <n  \textrm{ and } \gcd(a,n)=1.$$
Furthermore,
$$V(P) = 1 \Longleftrightarrow a= n-1 $$
and
$$V(P) = n-1 \Longleftrightarrow a=1.$$
Geometrically, this means that 
\begin{enumerate}
\item[(i)] All but one of the lattice points in the interior of $P$ are not visible if and only if all of these $n-1$ lattice points lie on the diagonal 
$\vec{u}+\vec{v}.$
\item[(ii)] All the lattice points in the interior of $P$ are visible if and only if all of these lattice points lie on the diagonal 
$$t\vec{u}+(1-t)\vec{v}, \; 0 \le t \le 1.$$
\end{enumerate}
\end{theorem}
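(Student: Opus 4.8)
\emph{Proof strategy.} The plan is to settle Part~1 by putting $P$ in a unimodular normal form, which reduces the whole statement to the family $P_{a,n}$, and then to use the fact that the interior lattice points of $P_{a,n}$ can be listed explicitly --- one at each height. For Part~1: since $\vec{u}$ is visible its coordinates are coprime, so B\'ezout's identity lets one complete $\vec{u}$ to a $\Z$-basis of $\Z^2$; reading off the change-of-basis matrix gives a unimodular $T_1$ with $T_1\vec{u}=(1,0)$. Unimodular maps preserve area, so $T_1\vec{v}=(a',n')$ with $|n'|=\textrm{area}(P)=n$, and $n'\neq 0$ since $\vec{u},\vec{v}$ are independent; composing with $(x,y)\mapsto(x,-y)$ if $n'<0$ and then with a shear $(x,y)\mapsto(x+jy,y)$ that pulls the first coordinate of the image of $\vec{v}$ into $\{0,\dots,n-1\}$, one obtains a unimodular $T$ with $T(P)=P_{a,n}$, $T\vec{u}=(1,0)$, $T\vec{v}=(a,n)$. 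Visibility is preserved, so $\gcd(a,n)=1$ and in particular $a\neq 0$ (we may assume $n\ge 2$, the case $n=1$ of a fundamental parallelogram being vacuous), so $1\le a<n$. As $T^{-1}$ is again unimodular --- hence preserves interiors, lattice points and visibility --- it suffices to prove everything else for $P=P_{a,n}$, $\vec{u}=(1,0)$, $\vec{v}=(a,n)$.

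A lattice point $(x,y)$ lies in the interior of $P_{a,n}$ iff $y\in\{1,\dots,n-1\}$ and $ya/n<x<ya/n+1$; since $\gcd(a,n)=1$ the number $ya/n$ is never an integer, so at each height $y$ there is a unique interior lattice point $Q_y=(\lceil ya/n\rceil,\,y)$. Hence there are exactly $n-1$ interior lattice points, $V(a,n)$ counts those $y$ with $\gcd(\lceil ya/n\rceil,y)=1$, and $Q_1=(1,1)$ gives $1\le V(a,n)\le n-1$. If $a=1$ then $Q_y=(1,y)$ for every $y$, all visible, so $V=n-1$, and every $Q_y$ lies on the diagonal joining $\vec{u}=(1,0)$ to $\vec{v}=(1,n)$. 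If $a=n-1$ then $\lceil y(n-1)/n\rceil=\lceil y-y/n\rceil=y$, so $Q_y=(y,y)$, visible only for $y=1$, whence $V=1$, and every $Q_y$ lies on the diagonal from $(0,0)$ to $\vec{u}+\vec{v}=(n,n)$.

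For the converses, suppose first $2\le a\le n-1$; I exhibit a non-visible $Q_y$, so that $V(a,n)\le n-2$. If $a>n/2$ then $1<2a/n<2$, so $Q_2=(2,2)$ is non-visible. If $2\le a<n/2$ (hence $n\ge 5$), take $y=2\lfloor n/a\rfloor$; one checks $4\le y\le n-1$, and $n-a<\lfloor n/a\rfloor\,a<n$ forces $ya/n\in(1,2)$, so $Q_y=(2,y)$ with $y$ even is non-visible. Thus $V(a,n)=n-1$ forces $a=1$. Now suppose $1\le a\le n-2$; since $Q_1=(1,1)$ is visible I must exhibit a \emph{second} visible point, so that $V(a,n)\ge 2$. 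If $a<n/2$ then $2a/n<1$, so $Q_2=(1,2)$ works. If $a>n/2$, set $b=n-a$, so $2\le b<n/2$ and $\gcd(b,n)=1$, and take $y=\lceil n/b\rceil$; then $y\ge 3$, and $n<yb<2n$ yields both $y<n$ and $ya/n=y-yb/n\in(y-2,y-1)$, so $Q_y=(y-1,y)$ is a pair of consecutive integers and hence visible. Thus $V(a,n)=1$ forces $a=n-1$. I expect this last sub-case --- producing a usable height when $n/2<a\le n-2$ --- to be the main obstacle; the choice $y=\lceil n/(n-a)\rceil$ is the key to it.

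It remains to record (i) and (ii). In $P_{a,n}$ the diagonal from $(0,0)$ to $\vec{u}+\vec{v}=(a+1,n)$ contains exactly the $g-1$ lattice points strictly between its endpoints, where $g=\gcd(a+1,n)$, and each of these equals $\tfrac{j}{g}(\vec{u}+\vec{v})$ with $0<j<g$, so it lies in the interior of $P_{a,n}$; these $g-1$ points exhaust the $n-1$ interior lattice points exactly when $g=n$, i.e.\ when $a=n-1$. Symmetrically, the diagonal joining $\vec{u}=(1,0)$ and $\vec{v}=(a,n)$ contains exactly $\gcd(a-1,n)-1$ interior lattice points, which exhaust the interior exactly when $a=1$. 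Combined with the equivalences $V(a,n)=1\Leftrightarrow a=n-1$ and $V(a,n)=n-1\Leftrightarrow a=1$ proved above, this gives (i) and (ii) for $P_{a,n}$; and since the unimodular $T$ carries the two diagonals of $P_{a,n}$ onto those of $P$ and preserves visibility, (i) and (ii) hold for the original $P$.
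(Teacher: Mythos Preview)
Your argument is correct and, for the reduction step and the exhibition of a non-visible interior point, runs essentially parallel to the paper's proof (B\'ezout plus a shear to reach $P_{a,n}$; then produce a point of the form $(2,2l)$ --- your two sub-cases $a>n/2$ and $a<n/2$ simply correspond to the paper's choices $l=1$ and $l=\lfloor n/a\rfloor$).

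The genuine divergence is in showing that $P_{a,n}$ contains at least two visible interior points when $2\le a\le n-2$. The paper argues abstractly: it passes to the quotient $G=\Z^2/(\Z e_1\oplus\Z(a,n))$, picks an interior lattice point $\vec{s}$ off the main diagonal, and observes that the cosets $S$ and $-S$ determine two distinct rays from the origin, each of which meets the interior in a visible point. You instead give an explicit second visible point: $(1,2)$ when $a<n/2$, and $(y-1,y)$ with $y=\lceil n/(n-a)\rceil$ when $n/2<a\le n-2$. Your route is more elementary and entirely self-contained, at the cost of a short case split; the paper's route is slicker but leaves more to the reader (e.g.\ why the primitive point on each ray actually lies in the interior, and why the two rays are distinct). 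You also spell out the geometric statements (i) and (ii) more carefully than the paper does, by counting interior lattice points on each diagonal via $\gcd(a\pm1,n)$; the paper leaves the ``only if'' direction of these implicit.
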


\begin{proof}
We begin by observing that there are precisely $(n-1)$ lattice points in the interior of $P$. This follows by combining Pick's theorem with the hypotheses that $\textrm{area}(P)=n$ and that the only lattice points on the boundary of $P$ are the vertices.

Without loss of generality we may assume that the pair of lattice points 
$$\vec{u}= (u_1,u_2),\, \vec{v}=(v_1,v_2)$$ 
are positively oriented, that is $\det(\vec{u},\vec{v}) >0$. Since 
$\gcd(u_1,u_2) = 1$, there exist $m_1,m_2 \in \Z$ such that
$$m_1u_1+m_2u_2 =1.$$
We now consider the unimodular matrix 
\begin{equation*}
\left( \begin{array}{cc} m_1 & m_2\\ -u_2 & u_1 \end{array} \right).
\end{equation*}
This unimodular matrix maps $P$ to the parallelogram 
$$ P^{\prime}= \left\{ t_1 (1,0) + t_2(m_1v_1+m_2v_2,n) \; : \, 0\leq t_1,t_2 \leq 1 \right\}.$$
Since $(v_1,v_2)$ is visible, the same holds for $(m_1v_1+m_2v_2,n)$. If $$0 < (m_1v_1+m_2v_2) <n, \textrm{ then }
a=  m_1v_1+m_2v_2 .$$ 
If $ (m_1v_1+m_2v_2) $ does not satisfy this inequality, then we find $k\in \Z$ such that 
$$0 \leq ( m_1v_1+m_2v_2  +kn) <n$$ and act on the parallelogram $P^\prime$ by the unimodular matrix 
\begin{equation*}
\left( \begin{array}{cc}1 & k\\0& 1\end{array} \right)
\end{equation*}
to obtain $P_{a,n}$.

We now consider the special cases $a=1$ and $a=n-1$. All of the lattice points inside $P_{1,n}$ lie on the diagonal 
$y+x=n$. All of these lattice points are visible. 
Turning to $a=n-1$ we note that all of the lattice points inside $P_{n-1,n}$ lie on the diagonal $y=x$. Consequently the only visible point is $(1,1)$.

To complete the proof of the theorem we need to show the following two things. 
For $2 \leq a \leq n-2, P_{a,n}$ contains: (a) at least one non-visible point; (b) at least two visible points.

The proof of item (a) is as follows. Since $2 \le a < n$ and $\gcd(a,n)$=1, there is a positive integer $l$ such that 
$n/2 < la <n$. 
The lattice point
$$ \frac{2(n-la)}{n}(1,0) + \frac{2l}{n}(a,n) = (2,2l)$$
is a non-visible lattice point in the interior of $P_{a,n}$.

We turn to the proof of item (b). Since $\gcd(a+1,n) \leq n/2$, there is a lattice point $\vec{s}$ inside $P_{a,n}$ which 
does not lie on the diagonal $y=x$. We now consider the group $G= (\Z \oplus \Z)/(\Z e_1 \oplus \Z (a,n))$.  Let $S$ denote the coset of $G$ that is represented by $\vec{s}$. The lattice point representatives for the cosets  $S$ and $-S$ lie on half-rays that are distinct from each other. Each of these half-rays contains a visible lattice point inside $P_{a,n}$. 
\end{proof}

\subsection{Motivation for studying this problem}

Having proved Theorem~\ref{extreme-case}, we are in a position to expand on the motivation underlying this paper. Unlike the square $[-r,r]^2$, the parallelograms $P_{a,n}$ are long and thin. Thus, it seems plausible to infer that most values of $V(a,n)/n$ should be quite different from $6/\pi^2$.  What we will see is that this inference is false much of the time. That is, for most values of $a$, $V(a,n)/n$ is close to $6/\pi^2$. However, when $a$ is close to 1 or close to $n$ the numerics for $V(a,n)/n$ differ significantly from $6/\pi^2$. The reader may want to jump ahead to Section 5 titled \emph{Some Numerics} to see some graphs of $V(a,n)/n$.

\section{$V(a,n)=V(a^{-1} \mod n,n)$}
 
\begin{proposition}\label{inv-rel}
Let $b= a^{-1} \mod n$. Then the parallelograms $P_{a,n}$ and $P_{b,n}$ are unimodularly equivalent. Consequently, 
$V(a,n) =V(b,n)$.
\end{proposition}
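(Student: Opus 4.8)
The plan is to exhibit an explicit unimodular matrix $T$ that carries $P_{a,n}$ onto $P_{b,n}$; the equality $V(a,n)=V(b,n)$ then follows at once, since (as noted in Section~2) unimodular maps fix the origin, permute $\Z^2$, preserve interiors, and preserve visibility, and hence restrict to a bijection between the visible interior lattice points of the two parallelograms.

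The only arithmetic input is this: because $ab\equiv 1\pmod n$, the quantity $k:=(ab-1)/n$ is an integer. With this in hand I would write down the candidate
\[
T=\begin{pmatrix} b & -k\\ n & -a\end{pmatrix},
\]
which plainly has integer entries, and then check the two things that matter. First, $\det T = -ab+kn = -ab+(ab-1) = -1$, so $T$ is unimodular. Second, a direct computation gives $T(1,0)=(b,n)$ and $T(a,n)=(ab-kn,\,0)=(1,0)$, so $T$ sends the two edge vectors $(1,0),(a,n)$ of $P_{a,n}$ emanating from the origin to the two edge vectors $(b,n),(1,0)$ of $P_{b,n}$ emanating from the origin (in the opposite order; $T$ is orientation-reversing). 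Being linear and bijective, $T$ therefore maps the parallelogram with a vertex at the origin spanned by $(1,0),(a,n)$ onto the one spanned by $(1,0),(b,n)$, i.e.\ $T(P_{a,n})=P_{b,n}$. One should also note in passing that $b$, being the least positive residue coprime to $n$, lies in $\{1,\dots,n-1\}$, so $P_{b,n}$ really is a parallelogram of the form considered here.

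The genuine content of the argument is just finding the right matrix; after that, everything is a one-line verification, and the divisibility $n\mid ab-1$ is doing all the work, since it is precisely what forces both the integrality of $-k$ and the value $\det T=-1$. The one point to be careful about is the bookkeeping of which spanning vector maps to which: the correct $T$ interchanges the roles of $(1,0)$ and $(a,n)$ rather than fixing $(1,0)$, and this swap is unavoidable --- a unimodular map fixing $(1,0)$ would force $b\equiv a\pmod n$, hence $a^2\equiv 1$, which fails in general. Apart from that, one may optionally observe for motivation that $P_{a,n}$ and $P_{b,n}$ are both clean parallelograms of area $n$, so Theorem~\ref{extreme-case} already makes their equivalence plausible, though it does not by itself supply the map, as that theorem is not phrased with any uniqueness.
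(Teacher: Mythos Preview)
Your proof is correct and is essentially identical to the paper's: both write $ab\equiv 1\pmod n$ as an integer relation and use the resulting unimodular matrix with columns $(b,n)$ and $((1-ab)/n,\,-a)$ to send $(1,0)\mapsto(b,n)$ and $(a,n)\mapsto(1,0)$. Your $-k$ is the paper's $k$, so the two matrices coincide; you simply spell out the verifications that the paper leaves to the reader.
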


\begin{proof} 
There exists $k\in \Z$ such that $ab+kn=1$. The unimodular transformation $L:\Z^2 \rightarrow \Z^2$, via, 
$$L((1,0)) = (b,n) \textrm{ and }L((0,1)) = (k,-a)$$ 
maps $P_{a,n}$ to $P_{b,n}$.
\end{proof}

The above proposition shows that different values of $a$ that are far apart can take on the same $V$-value. Let us elaborate. Consider the following set in $[0,1]^2$:
$$S= \left\{ \left( \frac{a}{n}, \frac{b}{n} \right) \; : \; a,b,n \in \Z^+, 1\leq a,b, <n, ab \equiv1 \pmod{n} \right\}.$$
A standard argument via Kloosterman sums shows that the points in $S$ are uniformly distributed in the unit square (see~\cite{BK}). A consequence of this is that an element and its multiplicative inverse modulo $n$ can be very far apart. These two elements of $\Z^\ast_n$ take on the same $V$-value.

\section{Counting visible points in $P_{a,n}$}

\begin{theorem}\label{lat-ptP}
The lattice points in the interior of $P_{a,n}$ are 
\begin{equation}\label{eq:lat-pt}
\left\langle\frac{k(n-a)}{n}\right\rangle(1,0) + \frac{k}{n}(a,n)  = \left( \left\lceil\frac{ka}{n}\right\rceil, k \right),
\end{equation} 
with $k=1,\dots,n-1.$
We note that 
$$\left( \left\lceil\frac{ka}{n}\right\rceil, k \right) = \left(\frac{ka}{n}-\left\langle\frac{ka}{n}\right\rangle + 1, k\right).$$
\end{theorem}

From Theorem~\ref{lat-ptP} we get that the lattice points inside the parallelogram $P_{n-a,n}$ are of the form $(\lceil k(n-a)/n \rceil, k)$ for $k=1,\ldots,n-1$.   Consequently  $(\lceil k(n-a)/n \rceil, k)$ is visible if and only if 
$$\gcd(\lceil k(n-a)/n \rceil, k)=1.$$ At this juncture we interject a pretty result that allows us to view the quantity $\gcd(\lceil k(n-a)/n \rceil, k)$ in a couple of different ways.

\begin{proposition} 
Let $a,n$ be positive integers with $a <n$ and $\gcd(a,n)=1$. For $k=1,\ldots,n-1$ ,
\begin{equation} 
  \gcd\left(\left\lceil \frac{k(n-a)}{n}\right\rceil,k\right) =
\gcd\left(\left\lceil \frac{k(n-a)}{n}\right\rceil, \left\lfloor\frac{ka}{n}\right\rfloor\right) =
 \gcd\left(\left\lfloor\frac{ka}{n}\right\rfloor,k\right).
\end{equation}
Furthermore if $n$ is a prime $p$, then 
\begin{equation}
\gcd\left(\left\lceil \frac{k(p-a)}{p}\right\rceil, k\right) 
=\gcd(ka \!\!\!\! \mod p,k).
\end{equation}
\end{proposition}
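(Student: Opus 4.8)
The plan is to reduce both assertions to a single elementary identity for the floor function, combined with the Euclidean-algorithm fact $\gcd(s,t)=\gcd(s,s+t)$. First I would note that, since $\gcd(a,n)=1$ and $1\le k\le n-1$, the rational number $ka/n$ is never an integer; consequently, using $k(n-a)/n=k-ka/n$,
$$\left\lceil\frac{k(n-a)}{n}\right\rceil = \left\lceil k-\frac{ka}{n}\right\rceil = k-\left\lfloor\frac{ka}{n}\right\rfloor .$$
Writing $q=\lfloor ka/n\rfloor$, the three quantities in the proposition's first display are then exactly $\gcd(k-q,k)$, $\gcd(k-q,q)$ and $\gcd(q,k)$.

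The first chain of equalities is now immediate. Since $k=(k-q)+q$, the identity $\gcd(s,t)=\gcd(s,s+t)$ gives $\gcd(k-q,k)=\gcd(k-q,q)$, and a second application gives $\gcd(k-q,q)=\gcd\big((k-q)+q,q\big)=\gcd(k,q)$. This proves the first statement.

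For the prime case I would set $r=ka\bmod p=ka-pq$. Since $k\mid ka$, reducing modulo $k$ gives $\gcd(r,k)=\gcd(ka-pq,k)=\gcd(pq,k)$. Here the primality of $p$ enters decisively: because $1\le k\le p-1$ we have $\gcd(p,k)=1$, so $\gcd(pq,k)=\gcd(q,k)$. Combining this with the first part yields $\gcd(r,k)=\gcd(q,k)=\gcd(k-q,k)=\gcd\big(\lceil k(p-a)/p\rceil,k\big)$, which is the claimed identity.

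I do not expect a genuine obstacle; the only point that needs care is the floor–ceiling bookkeeping, namely observing that $\gcd(a,n)=1$ forces $ka/n\notin\Z$ so that $\lceil k(n-a)/n\rceil=k-\lfloor ka/n\rfloor$ holds exactly. It is worth noting in passing that this is precisely where primality is required for the last identity: for composite $n$ one can have $\gcd(n,k)>1$, and then $\gcd(nq,k)$ need not equal $\gcd(q,k)$, so the same reduction breaks down — consistent with the proposition only asserting the $\gcd(ka\bmod n,k)$ form when $n$ is prime.
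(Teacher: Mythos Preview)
Your proof is correct and essentially the same as the paper's. Both arguments hinge on the identity $\lceil k(n-a)/n\rceil + \lfloor ka/n\rfloor = k$ (the paper derives it via $1-\langle x\rangle=\langle -x\rangle$, you via $\lceil m-x\rceil=m-\lfloor x\rfloor$), then use the Euclidean $\gcd$ relation; for the prime case both write $r=ka-pq$ and use $\gcd(p,k)=1$ to strip the factor $p$.
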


\begin{proof}
We recall the identity $1-\langle x \rangle=\langle-x\rangle$ when $x\not\in \Z$. On setting $x = k(n-a)/n$ we get
$$ \left\lceil \frac{k(n-a)}{n}\right\rceil+ \left\lfloor\frac{ka}{n}\right\rfloor = \frac{k(n-a)}{n} -\left\langle\frac{k(n-a)}{n}\right\rangle+1 + \frac{ka}{n}-
\left\langle\frac{ka}{n}\right\rangle=k.$$
Consequently, 
$$\gcd\left(\left\lceil \frac{k(n-a)}{n}\right\rceil, \left\lfloor\frac{ka}{n}\right\rfloor\right)= \gcd\left(\left\lceil \frac{k(n-a)}{n}\right\rceil,k\right) = \gcd\left(\left\lfloor\frac{ka}{n}\right\rfloor,k\right).$$

We now consider when $n=p$ is prime. Let $r = ka \mod p$. Then 
$$ ka= p \left\lfloor \frac{ka}{p} \right\rfloor +r,$$
and we infer that 
$$\gcd\left(p \left\lfloor ka/p \right\rfloor,k\right) | \gcd(r,k) \textrm{ and } \gcd(r,k)| \gcd(p \left\lfloor ka/p\right\rfloor,k).$$ Consequently,
$ \gcd(r,k)= \gcd(p \lfloor ka/p \rfloor,k)$. We conclude by observing that since $\gcd(p,k)=1$,  $\gcd(p\lfloor ka/p \rfloor,k) = \gcd(\lfloor ka/p \rfloor,k).$
\end{proof}

We now pose a general question.

\begin{question}
We have proved that $V(1,n)=n-1$ and $V(n-1,n) = 1$. We pose the \emph{vague} question of what can we say about $V(a,n)$ for $a \not =1,n-1$?
\end{question}
 
The next result gives specific examples of $a$ where $V(a,n)/n$ is ``far" away from 0.6.

\begin{theorem} 
Let $n$ be odd. Then we have the following:

\begin{equation}\label{eq:V(2,n)}
V(2,n) = V((n+1)/2,n) = \left\{ \begin{array}{ll}
(3n-3)/4, & n \equiv 1 \pmod{4} \\
(3n-5)/4, & n \equiv 3 \pmod{4}.
\end{array} \right.
\end{equation} 

\begin{equation}\label{eq:V((n-1)/2,n)}
V\left((n-1)/2,n\right)= V(n-2,n)=(n+1)/2.
\end{equation}



\end{theorem}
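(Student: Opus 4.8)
The plan is to reduce all four quantities to $V(2,n)$ and $V(n-2,n)$, and then read these off directly from the explicit list of interior lattice points in Theorem~\ref{lat-ptP}. Since $n$ is odd, $2\cdot\frac{n+1}{2}\equiv 1\pmod n$, so $\frac{n+1}{2}=2^{-1}\bmod n$; and $(n-1)(n-2)\equiv 2\pmod n$ gives $\frac{n-1}{2}\cdot(n-2)\equiv 1\pmod n$, so $n-2=\left(\frac{n-1}{2}\right)^{-1}\bmod n$. Proposition~\ref{inv-rel} then yields $V(2,n)=V\!\left(\frac{n+1}{2},n\right)$ and $V\!\left(\frac{n-1}{2},n\right)=V(n-2,n)$, so it is enough to prove \eqref{eq:V(2,n)} for $a=2$ and \eqref{eq:V((n-1)/2,n)} for $a=n-2$.

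To compute $V(2,n)$ I would apply Theorem~\ref{lat-ptP}: the interior lattice points of $P_{2,n}$ are $\left(\left\lceil 2k/n\right\rceil,k\right)$, $k=1,\dots,n-1$, and $2k/n$ is never an integer here since $n$ is odd. For $1\le k\le\frac{n-1}{2}$ one has $0<2k/n<1$, so the point is $(1,k)$, which is always visible and contributes $\frac{n-1}{2}$ points. For $\frac{n+1}{2}\le k\le n-1$ one has $1<2k/n<2$, so the point is $(2,k)$, visible precisely when $k$ is odd. The remaining task is to count the odd integers in $\left[\frac{n+1}{2},n-1\right]$: writing $n=4m+1$ this interval is $[2m+1,4m]$, and writing $n=4m+3$ it is $[2m+2,4m+2]$, and in both cases a short count produces exactly $m=\lfloor n/4\rfloor$ odd values. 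Adding $\frac{n-1}{2}$ gives $\frac{3n-3}{4}$ when $n\equiv 1\pmod 4$ and $\frac{3n-5}{4}$ when $n\equiv 3\pmod 4$, which is \eqref{eq:V(2,n)}.

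For $V(n-2,n)$ I would combine Theorem~\ref{lat-ptP} with the identity $\left\lceil\frac{k(n-2)}{n}\right\rceil=k-\left\lfloor\frac{2k}{n}\right\rfloor$, which is the case $a=2$ of the identity $\left\lceil\frac{k(n-a)}{n}\right\rceil+\left\lfloor\frac{ka}{n}\right\rfloor=k$ proved above: thus the interior lattice points of $P_{n-2,n}$ are $\left(k-\left\lfloor 2k/n\right\rfloor,k\right)$. For $1\le k\le\frac{n-1}{2}$ this is $(k,k)$, visible only when $k=1$; for $\frac{n+1}{2}\le k\le n-1$ this is $(k-1,k)$, and $\gcd(k-1,k)=1$ always, so all $\frac{n-1}{2}$ of these are visible. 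Hence $V(n-2,n)=1+\frac{n-1}{2}=\frac{n+1}{2}$, which is \eqref{eq:V((n-1)/2,n)}. I do not expect a genuine obstacle; the only step demanding care is the parity count in the $V(2,n)$ case, where the parities of the interval's endpoints depend on $n\bmod 4$ and must be tracked carefully to land exactly on the two stated closed forms.
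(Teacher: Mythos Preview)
Your argument is correct and follows essentially the same route as the paper: both reduce via Proposition~\ref{inv-rel} and then enumerate the interior lattice points using Theorem~\ref{lat-ptP}. The one minor variation is in \eqref{eq:V((n-1)/2,n)}: the paper lists the interior points of $P_{(n-1)/2,n}$ directly (obtaining $(k/2,k)$ for $k$ even and $((k+1)/2,k)$ for $k$ odd), whereas you work instead with $P_{n-2,n}$ and invoke the identity $\lceil k(n-2)/n\rceil=k-\lfloor 2k/n\rfloor$ to recycle the floor computation from the $a=2$ case; both arrive at the same count $1+\frac{n-1}{2}$.
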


\begin{proof}
Since $(n+1)/2 = 2^{-1} \mod n$, we have $V((n+1)/2,n)=V(2,n)$ by Proposition~\ref{inv-rel}. We now invoke 
~\eqref{eq:lat-pt}. The lattice points in the interior of $P_{2,n}$ are 
$$\left\{(\lceil 2k/n\rceil,k) \; : \; k=1,\ldots, n-1 \right\} $$
$$= \left\{ (1,1),(1,2),\ldots, (1,(n-1)/2), (2,(n+1)/2),\ldots, (2,n-1) \right\}.$$
Thus the only non-visible lattice points inside $P_{2,n}$ are of the form $(2,k)$ with $k$ even and $n/2 < k \leq (n-1)$. 
A simple counting argument then gives us~\eqref{eq:V(2,n)}.

Since $n-2= ((n-1)/2)^{-1} \mod n$, we have $V((n-1)/2,n)=V(n-2,n)$ by Proposition~\ref{inv-rel}. By~\eqref{eq:lat-pt} the lattice points in the interior of $P_{(n-1)/2,n}$ are of the form 
\begin{equation}
\left\langle \frac{k(n+1)}{2n} \right \rangle (1,0) + \frac{k}{n}\left(\frac{n-1}{2}\,,n\right) = \left\{ \begin{array}{ll} 
(k/2,k), & k \textrm{ even} \\ ((k+1)/2,k), & k \textrm{ odd,} \end{array} \right. 
\end{equation}
for $k =1,2,\dots,n-1.$ Now, if $k$ is odd, then $(\gcd((k+1)/2,k) =1$; and if $k$ is even, then $\gcd(k/2,k) =1$ if and only if $k=2$. 
By combining these observations we conclude that $V((n-1)/2,n) = (n+1)/2$.
\end{proof}

\noindent We remark that we can extend the above result for any fixed value of $a$. Our next result gives an expression for $V(a,n)/n$ in terms of the Euler phi function and the Mobius function.

\begin{theorem}\label{V(a,n)-for}
Let $\gcd(a,n)=1$. Then 
\begin{equation}\label{eq:phisum} 
\frac{V(a,n)}{n} = \frac{1}{a}\sum_{s=1}^{a}\frac{\varphi(s)}{s} - 
\frac{1}{n}\sum_{s=1}^{a}\sum_{d|s} \left(\left \langle\frac{(s-1)n}{ad}\right\rangle - \left \langle\frac{sn}{ad}\right\rangle\right) \mu(d) - \frac{2}{n},
\end{equation}
where $\varphi$ is the Euler phi function and $\mu$ is the Mobius function.
\end{theorem}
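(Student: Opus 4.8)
The plan is to start from Theorem~\ref{lat-ptP}, which tells us that $V(a,n)$ is the number of $k\in\{1,\dots,n-1\}$ with $\gcd\!\big(\lceil ka/n\rceil,\,k\big)=1$. The natural first move is to group the index $k$ by the value $s:=\lceil ka/n\rceil$ of the first coordinate. Since $1\le a<n$, consecutive values of $ka/n$ differ by $a/n<1$, so $s$ increases by $0$ or $1$ as $k$ runs from $1$ to $n-1$; it starts at $1$ and ends at $a$, hence $s$ ranges over $\{1,\dots,a\}$. The condition $\lceil ka/n\rceil=s$ is equivalent to $(s-1)n/a<k\le sn/a$, so the set $K_s$ of admissible $k$ is a block of consecutive integers. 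Because $\gcd(a,n)=1$ and $1\le s\le a$, the endpoints $(s-1)n/a$ and $sn/a$ are integers only in the extreme cases $s=1$ (lower endpoint $0$) and $s=a$ (upper endpoint $n$); and in the case $s=a$ the block would reach $k=n$, which must be discarded because $k\le n-1$.

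Next I would count, for each $s$, the number $W_s$ of elements of $K_s$ coprime to $s$, so that $V(a,n)=\sum_{s=1}^{a}W_s$. The key tool is M\"obius inversion for coprime integers in an interval:
\[
\#\{k\in(x,y]:\gcd(k,s)=1\}=\sum_{d\mid s}\mu(d)\Big(\lfloor y/d\rfloor-\lfloor x/d\rfloor\Big).
\]
Applying this with $x=(s-1)n/a$ and $y=sn/a$, and subtracting $1$ in the single case $s=a$ (since $k=n$ is coprime to $a$ but lies outside our range), gives a closed form for each $W_s$.

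To bring this into the shape of~\eqref{eq:phisum} I would then separate each floor-difference into a main part and a fractional-part part via $\lfloor sn/(ad)\rfloor-\lfloor (s-1)n/(ad)\rfloor=\tfrac{n}{ad}+\big\langle\tfrac{(s-1)n}{ad}\big\rangle-\big\langle\tfrac{sn}{ad}\big\rangle$. Summing the main part over $d\mid s$ and using the standard identity $\sum_{d\mid s}\mu(d)/d=\varphi(s)/s$ produces the term $\tfrac{n}{a}\cdot\tfrac{\varphi(s)}{s}$, while the leftover pieces assemble into the fractional-part double sum. Summing over $s=1,\dots,a$, carrying along the constant contributed by the $s=a$ boundary correction, and dividing by $n$ yields the asserted formula.

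The steps are all elementary, and the only place where care is genuinely needed is the bookkeeping at the two endpoints $s=1$ and $s=a$: getting the half-open intervals right and correctly excluding $k=n$ is exactly what pins down the additive constant, and keeping straight which fractional part is subtracted from which is what fixes the sign of the double sum. Everything else --- the M\"obius inversion over an interval, the splitting into main and error terms, and the Euler/M\"obius identity --- is routine.
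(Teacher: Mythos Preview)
Your approach is essentially identical to the paper's: slice $P_{a,n}$ by the vertical lines $x=s$ for $s=1,\dots,a$, count the integers in each half-open interval $((s-1)n/a,\,sn/a]$ coprime to $s$ via M\"obius inversion (the paper phrases this step through the Legendre totient $\varphi(x,N)$), and then split each floor-difference into the main term $n/(ad)$ plus fractional parts, using $\sum_{d\mid s}\mu(d)/d=\varphi(s)/s$. The only discrepancy is in the endpoint bookkeeping: you (correctly) find that only $s=a$ needs a correction, namely excluding $k=n$, which yields an additive constant $-1/n$, whereas the paper claims corrections at both $s=1$ and $s=a$ to get $-2/n$; your count is the careful one here, so when you carry the computation through do not be surprised if your constant and the sign of the double sum differ from the displayed formula by what appears to be a harmless typo in the paper.
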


\begin{proof}
We begin by writing $V(a,n)$ has a sum of $a$ terms and then apply an asymptotic formula for each of these terms.
For $s=1,2,3, \ldots, a$, let $l_s$ denote the vertical line (in $\R^2$) $x=s$. We have that 
$$ P_{a,n} \cap \Z^2 = \bigcup_{s=1}^{a} \left(P_{a,n}\cap l_s \cap \Z^2 \right), $$
and consequently
\begin{equation}\label{eq:V-for0}
V(a,n) +2= \sum_{s=1}^a V(s,a,n),
\end{equation}
where 
$$V(s,a,n) =\# \textrm{ of visible lattice points in the intersection } (P_{a,n} \cap l_s).$$
The inclusion of 2 in the LHS of~\eqref{eq:V-for0} arises for the following reason. The term $V(1,a,n)$ counts the lattice point (1,0) and $V(a,a,n)$ 
counts the lattice point $(a,n)$. Both these lattice points are boundary lattice points of $P_{a,n}$. Since $V(a,n)$ only counts the visible points in the interior of $P_{a,n}$, we need to make an adjustment.

From~\eqref{eq:lat-pt} we get 
$$V(s,a,n) = \# \left \{ k \; : \; \frac{(s-1)n}{a} < k \le \frac{sn}{a}, \; \gcd(s,k) =1 \right\}.$$
Let $\varphi(x,N)$ denote the Legendre totient function, that is, the number of positive integers $\le x$ that are relatively prime to $N$. Then
\begin{equation}
V(s,a,n) = \varphi\left(\frac{sn}{a} \, ,s\right) - \varphi \left(\frac{(s-1)n}{a} \, ,s\right),
\end{equation}
and consequently
\begin{equation}\label{eq:Leg-expr}
V(a,n) +2= \sum_{s=1}^a V(s,a,n) = \sum_{s=1}^a \left(\varphi\left(\frac{sn}{a} \, ,s\right) - \varphi \left(\frac{(s-1)n}{a} \, ,s\right)\right).
\end{equation}
A standard identity for the Euler phi function is 
$$ \varphi(m) = \sum_{d|m} \frac{m}{d}\mu(d).$$
(See~\cite[Theorem 2.3]{Ap}.)
It is a simple matter to extend this identity to the following:
$$\varphi(x,N) = \sum_{d|N}\left[ \frac{x}{d} \right] \mu(d)= \sum_{d|N}\left(\frac{x}{d} -\left\langle\frac{x}{d}
\right\rangle \right) \mu(d).$$
Consequently
$$\varphi\left(\frac{sn}{a} \, ,s\right) -\varphi\left(\frac{(s-1)n}{a} \, ,s\right)=
  \frac{n}{a}\sum_{d|s} \frac{\mu(d)}{d}+ \sum_{d|s}\left(\left\langle \frac{(s-1)n}{ad}\right\rangle- \left\langle\frac{sn}{ad}\right\rangle\right)\mu(d).$$ 
Thus,
\begin{equation}\label{eq:V-for1}
V(s,a,n) =\frac{n}{a}\cdot \frac{\varphi(s)}{s} - \sum_{d|s}\left(\left\langle \frac{(s-1)n}{ad}\right\rangle- \left\langle\frac{sn}{ad}\right\rangle\right)\mu(d).
\end{equation}
We obtain~\eqref{eq:phisum} by substituting~\eqref{eq:V-for1} into~\eqref{eq:Leg-expr}.
\end{proof}

\begin{cor}\label{error-V}
If $a=O(n^{1-\epsilon})$ for some $0 < \epsilon < 1$, then
\begin{equation}
\frac{V(a,n)}{n} = \frac{1}{a}\sum_{s=1}^{a}\frac{\varphi(s)}{s}+ o(1).
\end{equation}
\end{cor}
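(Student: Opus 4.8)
The plan is to read off the exact formula for $V(a,n)/n$ from Theorem~\ref{V(a,n)-for} and show that everything except the main term $\frac{1}{a}\sum_{s=1}^{a}\varphi(s)/s$ is $o(1)$ under the hypothesis $a=O(n^{1-\epsilon})$. Concretely, the quantity to be estimated is
$$
E(a,n) := \frac{1}{n}\sum_{s=1}^{a}\sum_{d\mid s}\left(\left\langle\frac{(s-1)n}{ad}\right\rangle - \left\langle\frac{sn}{ad}\right\rangle\right)\mu(d) + \frac{2}{n},
$$
and the claim is simply that $E(a,n)\to 0$. The $2/n$ piece is harmless, so the whole matter reduces to bounding the double sum.

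First I would apply the trivial estimates: each fractional part lies in $[0,1)$, so the difference $\langle (s-1)n/(ad)\rangle - \langle sn/(ad)\rangle$ has absolute value strictly less than $1$, and $|\mu(d)|\le 1$. Hence the inner sum over $d\mid s$ is bounded in absolute value by $d(s)$, the number of divisors of $s$, and therefore
$$
|E(a,n)| \le \frac{1}{n}\sum_{s=1}^{a} d(s) + \frac{2}{n}.
$$
Next I would invoke the classical divisor-sum estimate $\sum_{s\le a} d(s) = O(a\log a)$ (equivalently, one may just write $\sum_{s\le a} d(s) = \sum_{d\le a}\lfloor a/d\rfloor \le a\sum_{d\le a} 1/d = O(a\log a)$, keeping the whole argument self-contained). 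This gives $|E(a,n)| = O\!\left(\tfrac{a\log a}{n}\right)$.

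Finally, substituting the hypothesis $a=O(n^{1-\epsilon})$ yields $|E(a,n)| = O\!\left(\tfrac{n^{1-\epsilon}\log n}{n}\right) = O\!\left(n^{-\epsilon}\log n\right) = o(1)$, which is exactly what is needed. There is no genuine obstacle here: the only point requiring any care is to make sure the bound on the fractional-part differences is applied uniformly in $s$ and $d$, so that the error estimate does not secretly depend on $a$ beyond the explicit $a\log a$ factor; once that is checked, the conclusion is immediate and the corollary follows from Theorem~\ref{V(a,n)-for}.
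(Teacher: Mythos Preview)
Your argument is correct and essentially identical to the paper's: both bound the fractional-part differences trivially by $1$, reduce the double sum to $\sum_{s\le a}\sum_{d\mid s}|\mu(d)|$ (you use the slightly larger $\sum_{s\le a}d(s)$, which is harmless), and evaluate this as $O(a\log a)$ by switching the order of summation to get $\sum_{d\le a}\lfloor a/d\rfloor$. The conclusion $O(a\log a)/n=o(1)$ under $a=O(n^{1-\epsilon})$ then follows exactly as you wrote.
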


\begin{proof}
The secondary term in~\eqref{eq:phisum} is 
$$\frac{1}{n}\sum_{s=1}^{a}\sum_{d|s} \left(\left \langle\frac{(s-1)n}{ad}\right\rangle - \left \langle\frac{sn}{ad}\right\rangle\right) \mu(d) - \frac{2}{n}.$$
We obtain our asymptotic by showing that the double sum is bounded above by $a\log(a).$ Now,
\begin{equation*}
\left|\sum_{s=1}^{a}\sum_{d|s} \left(\left \langle\frac{(s-1)n}{ad}\right\rangle - \left \langle\frac{sn}{ad}\right\rangle\right) \mu(d) \right| \le 
\sum_{s=1}^{a}\sum_{d|s} | \mu(d) | = \sum_{s=1}^{a}\sum_{d|s, \mu(d) \not= 0}1.
\end{equation*}
Now, 
\begin{equation*}
 \sum_{s=1}^{a}\sum_{d|s,\mu(d) \not= 0}1 =  \sum_{d \le a, \mu(d) \not= 0}\sum_{t \le a/d} 1 = 
\sum_{d \le a, \mu(d) \not= 0}\left\lfloor \frac{a}{d}\right\rfloor = O(a \log(a))= o(n).
\end{equation*}
\end{proof}


We now raise the question of the behavior of $V(a,n)/n$ when $n$ and $a$ are roughly of the same size. We have made very little progress on this. We only have the following simple result.

\begin{proposition}
Let $a,n \in \Z^+$ with $1< a <n$ and $\gcd(a,n)=1$. We make the further assumptions that both $a$ and 
$\lceil n/a \rceil$ are even. Then 
$$V(a,n) \le 0.75 a \cdot \left\lceil \frac{n}{a} \right\rceil.$$
\end{proposition}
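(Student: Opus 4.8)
The plan is to bound $V(a,n)$ by counting lattice points line-by-line, exactly as in the proof of Theorem~\ref{V(a,n)-for}, but now exploiting the extra parity hypotheses to throw away a definite fraction of the points on the two ``widest'' vertical lines. Write $m = \lceil n/a \rceil$, so that each vertical line $l_s$ ($s=1,\dots,a$) meets the interior of $P_{a,n}$ in at most $m$ lattice points; more precisely, recalling $V(s,a,n)=\#\{k:\ (s-1)n/a<k\le sn/a,\ \gcd(s,k)=1\}$, the number of integers $k$ in the half-open interval $((s-1)n/a,\,sn/a]$ is either $\lfloor n/a\rfloor$ or $\lceil n/a\rceil$, hence at most $m$. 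Summing the trivial bound $V(s,a,n)\le m$ over $s=1,\dots,a$ already gives $V(a,n)+2\le am$; the point is to improve this to $\tfrac34 am$.

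First I would handle the line $s=a$, i.e. the points $(a,k)$ with $(a-1)n/a<k\le n$, equivalently $k=n$ together with the roughly $\lfloor n/a\rfloor$ integers just below $n$; there are at most $m$ such $k$. Since $a$ is even, a point $(a,k)$ is visible only if $k$ is odd, so at most $\lceil m/2\rceil$ of these $k$ contribute. Similarly I would examine $s=1$: the points $(1,k)$ are all visible, so that line contributes its full count $\le m$ — so the saving must come from elsewhere. The cleaner route is to pair up the contribution more carefully: among the $a$ lines, consider those $s$ that are even (there are $a/2$ of them since $a$ is even). On such a line a visible point $(s,k)$ forces $k$ odd, so $V(s,a,n)\le\#\{k\ \text{odd}:\ (s-1)n/a<k\le sn/a\}\le\lceil m/2\rceil$. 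For the $a/2$ odd values of $s$ use the trivial bound $V(s,a,n)\le m$. Adding,
\[
V(a,n)+2 \;=\; \sum_{s=1}^{a}V(s,a,n)\;\le\;\frac{a}{2}\cdot m+\frac{a}{2}\cdot\Big\lceil\frac{m}{2}\Big\rceil.
\]
Here the hypothesis that $m=\lceil n/a\rceil$ is even makes $\lceil m/2\rceil=m/2$, so the right-hand side is $\tfrac{a}{2}m+\tfrac{a}{4}m=\tfrac34 am$, and therefore $V(a,n)\le\tfrac34 am-2<\tfrac34\,a\lceil n/a\rceil$, which is the claim (indeed with a little room to spare).

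The step I expect to need the most care is the count $\#\{k\ \text{odd}:\ (s-1)n/a<k\le sn/a\}\le m/2$ for even $s$: one must check that a half-open interval containing at most $m$ integers contains at most $\lceil m/2\rceil$ odd integers, and then that $\lceil m/2\rceil=m/2$ under the parity hypothesis on $m$ — this is where the ``$\lceil n/a\rceil$ even'' assumption is genuinely used, and it is the only subtle point; everything else is the bookkeeping already set up in Theorem~\ref{V(a,n)-for}. One should also double-check the edge cases $s=1$ (where the interval is $(0,n/a]$ and all points are visible, but $s=1$ is odd so it falls under the trivial bound, causing no trouble) and $s=a$ (even, handled above), and confirm the ``$+2$'' boundary correction only helps the inequality.
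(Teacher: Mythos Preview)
Your proposal is correct and follows essentially the same route as the paper: split $V(a,n)$ into the vertical-line counts $V(s,a,n)$, bound the odd-$s$ lines by $m=\lceil n/a\rceil$ and the even-$s$ lines by $m/2$ (using that on $x=s$ with $s$ even only odd $k$ can be visible, together with $m$ even), and sum over the $a/2$ lines of each parity to get $\tfrac34 am$. The only cosmetic differences are that the paper asserts the even-$s$ bound as $\tfrac12\lceil n/a\rceil$ directly rather than via $\lceil m/2\rceil$, and it drops the $+2$ boundary correction (which, as you note, only strengthens the inequality).
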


\begin{proof}
For $s=1,\ldots a$ the length of the vertical line segment $P_{a,n} \cap l_s$ equals $n/a$. Since $n/a \not\in \Z$ we can infer that for $s=1,\ldots, a,$
$$ \# \left(P_{a,n} \cap l_s \cap \Z^2 \right) = \left\lceil \frac{n}{a} \right\rceil.$$

For $s=1, \ldots, a$, we have the trivial inequalities
$$V(s,a,n) \le \left\lceil \frac{n}{a} \right\rceil, \; s \textrm{ odd}; \; V(s,a,n) \le \frac{1}{2} \left\lceil \frac{n}{a} \right\rceil, \; s \textrm{ even}.$$
For the second inequality we are simply using the fact that the $y$-coordinate of every other lattice point on a vertical line $x=s$ is even. Using these inequalities we obtain 
$$V(a,n) \leq  \sum_{s=1}^a V(s,a,n) \le \frac{a}{2} \cdot \frac{3}{2}\left\lceil \frac{n}{a}\right\rceil = 
0.75a\cdot \left\lceil \frac{n}{a}\right\rceil.$$

\end{proof}

\section{Some Numerics}

We have some numerical results for $V(a,n)/n$. For $n\in \Z^{+}$, let 
$$f_n   :  \{a \; : \; a=1, \ldots, n-1, \gcd(a,n)=1 \} \rightarrow [0,1] \textrm{ via } f_n(a) = V(a,n)/n.$$  
The following graphs are the graphs of $f_{499},f_{500},f_{1000}$ and $f_{9973}$. (499 and 9973 are primes.)
\begin{center}
 \includegraphics[height=4cm]{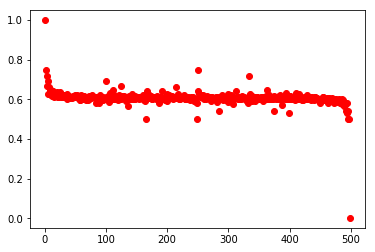}  \includegraphics[height=4cm]{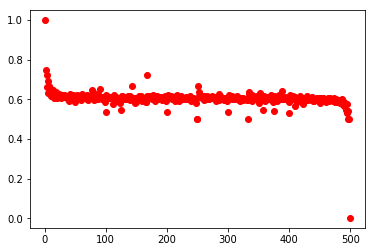}
\end{center}

\begin{center}
The graphs of $f_{499}$ and $f_{500}$.
\end{center}

\begin{center}
 \includegraphics[height=4cm]{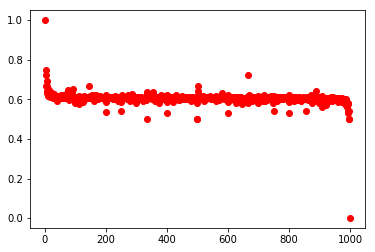}  \includegraphics[height=4cm]{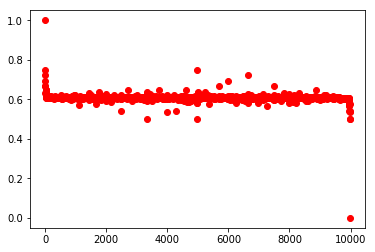}
\end{center}

\begin{center}
The graphs of $f_{1000}$ and $f_{9973}$.
\end{center}

The above result supports the heuristic that one-fourth of the lattice points in the interior of $P_{a,n}$ have both even $x$ and $y$ co-ordinates. Thus the upper bound for $V(a,n)$ should be at most $0.75n$. Unfortunately, we do not have a heuristic for a lower bound. Our computational work suggests the following conjecture.

\begin{conjecture}\index{conjecture!bounds for $V(a,n)/n$}
For $a \not=  1,n-1$,
$$\frac{1}{2} < \frac{V(a,n)}{n}< \frac{3}{4}.$$ 
\end{conjecture}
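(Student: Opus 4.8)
The plan is to prove the two inequalities by rather different means. For the upper bound $V(a,n)/n < 3/4$, I would try to globalize the even-coordinate argument already used in the last proposition of Section~4. That argument gave $V(a,n) \le 0.75\,a\lceil n/a\rceil$ under the parity hypotheses on $a$ and $\lceil n/a\rceil$; the point is that on each vertical line $l_s$ the lattice points on $P_{a,n}\cap l_s$ have $y$-coordinates forming an arithmetic progression of difference $1$, so at least half of them are even, and a point with even $y$ and even $x$ is invisible. The obstacle is that when $x=s$ is odd this parity consideration alone saves nothing, and $\lceil n/a\rceil$ can exceed $n/a$ by almost $1$, so the crude count $\sum_s \lceil n/a\rceil$ overshoots $n$ and the slack can be eaten up. So I would instead argue congruence-class by congruence-class: use Theorem~\ref{lat-ptP} to write the interior points as $(\lceil ka/n\rceil, k)$, $k=1,\dots,n-1$, partition the index set $\{1,\dots,n-1\}$ by $k\bmod 2$, note that among the $k$ with $\lceil ka/n\rceil$ even (which happens for a positive proportion of $k$ as $k$ runs through a full period, since $\gcd(a,n)=1$) the invisible points with $k$ even are ``free'' invisible points, and quantify that the count of $(k\text{ even},\ \lceil ka/n\rceil\text{ even})$ is $\ge (1/4-o(1))n$ using equidistribution of $ka\bmod n$; combined with the trivial fact that these points are invisible, this yields $V(a,n)\le (3/4+o(1))n$, and then one must remove the $o(1)$ by a finite check or a careful elementary bound on the discrepancy of $\{ka/n\}$.

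For the lower bound $V(a,n)/n > 1/2$, the natural tool is the formula of Theorem~\ref{V(a,n)-for} together with Corollary~\ref{error-V}. When $a=O(n^{1-\epsilon})$ we have $V(a,n)/n = \frac1a\sum_{s=1}^a \varphi(s)/s + o(1)$, and since $\varphi(s)/s \ge 1/2$ for all $s$ with equality only at $s=1,2,4,6,\ldots$ being impossible in a density sense — more precisely $\frac1a\sum_{s\le a}\varphi(s)/s \to 6/\pi^2 > 1/2$ — the main term is bounded below by a constant strictly exceeding $1/2$, uniformly once $a$ is large enough, so the inequality holds in that range for $n$ large. By Proposition~\ref{inv-rel}, $V(a,n)=V(a^{-1}\bmod n, n)$, so the range $a = O(n^{1-\epsilon})$ automatically also covers $a$ with small inverse, i.e. $a$ near $1$ or near $n$ only in the precise sense that $a\equiv \pm 1,\pm 2,\dots \pmod n$ are the exceptional residues — but those are exactly excluded (or handled by the explicit Theorem giving $V(2,n),V((n\pm1)/2,n),V(n-2,n)$, all of which are $> n/2$).

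The main obstacle is the regime where neither $a$ nor $a^{-1}\bmod n$ is $O(n^{1-\epsilon})$, i.e. both are of order roughly $n$; here the secondary term in~\eqref{eq:phisum} is not negligible and the method of Corollary~\ref{error-V} fails. I would attack this by a second representation of $V(a,n)$: writing $V(a,n)=\sum_{k=1}^{n-1}\mathbf 1[\gcd(\lceil ka/n\rceil,k)=1]$ and expanding the indicator via Möbius, $V(a,n)=\sum_{d\ge 1}\mu(d)\#\{k : d\mid k,\ d\mid \lceil ka/n\rceil\}$, and then estimating the inner count via the three-distance theorem / continued-fraction expansion of $a/n$, so that the error terms are controlled by the partial quotients of $a/n$. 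The honest expectation is that this step is hard and that a complete unconditional proof is out of reach by elementary means — which is, after all, why the statement is posed as a conjecture; a realistic ``proof proposal'' here is to establish the two inequalities for all sufficiently large $n$ in the ranges $\min(a, a^{-1}\bmod n) = O(n^{1-\epsilon})$, reduce the remaining cases to a discrepancy estimate for $\{ka/n\}_{k<n}$, and verify the finitely many small $n$ by direct computation as in Section~5.
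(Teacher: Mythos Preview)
The statement you are trying to prove is a \emph{conjecture} in the paper: the authors do not prove it, and there is no argument in the paper to compare your proposal against. What the paper offers is only the heuristic preceding the conjecture (roughly one quarter of interior points should have both coordinates even) together with the partial Proposition giving $V(a,n)\le 0.75\,a\lceil n/a\rceil$ under parity hypotheses, plus the numerics of Section~5.

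Your proposal is a reasonable sketch of how one might attack the problem, and you are candid that the middle regime (neither $a$ nor $a^{-1}\bmod n$ small) is where the real difficulty lies. A couple of concrete issues, however. First, the assertion ``$\varphi(s)/s\ge 1/2$ for all $s$'' is false: already $\varphi(6)/6=1/3$, and $\varphi(s)/s$ can be arbitrarily small. You immediately retreat to the correct statement that the average $\tfrac1a\sum_{s\le a}\varphi(s)/s\to 6/\pi^2>1/2$, but even that only yields $V(a,n)/n>1/2$ for $a$ large enough and with an ineffective $o(1)$; turning this into the strict inequality for \emph{every} $a\ne 1,n-1$ would still require a uniform lower bound on the main term together with explicit control of the error in Corollary~\ref{error-V}. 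Second, for the upper bound you correctly identify that counting pairs $(k,\lceil ka/n\rceil)$ with both coordinates even should give $\sim n/4$ invisible points, but ``remove the $o(1)$ by a finite check or a careful elementary bound on the discrepancy'' is precisely the step that is not available in general: the discrepancy of $\{ka/n\}$ is governed by the continued-fraction partial quotients of $a/n$, which can be as large as $n$, so there is no uniform elementary bound to appeal to. In short, your outline does not close either inequality unconditionally, which is consistent with the authors' decision to leave the statement as a conjecture.
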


\section{Further Speculative Remarks} 

The values of $f_n$ oscillate, but only slightly. One can arguably view it as Brownian motion with a negative drift. However,  if one decides to view the graph of $f_n$ from a distance, then it can be described as consisting of 3 parts. Initially it resembles a sharply decreasing function for $a \lessapprox \sqrt{n}$. The $\sqrt{n}$  follows by an application of the well-known \emph{eyeball} metric. It then levels off around $6/\pi^2$ and is ``constant" for most $a$-values. 
Then when $a \gtrapprox n-\sqrt{n}$ it again decreases sharply. (Here we once again invoke the eyeball metric.) There is some ambiguity among the authors about what the eyeball metric tells us. At least, one of us believes that the $\sqrt{n}$ term should be replaced by $\log(n)$. 

In the middle of the graph we see a few outliers where $f_n$ is noticeably bigger or smaller than $6/\pi^2$. At present, we have identified two factors influencing these effects. However, our attempts to make this intuition rigorous have not matured and will be continued in future work.

 Firstly, Proposition~\ref{inv-rel} shows that $$V(a,n) = V(a^{-1} \mod n,n).$$ As such, $a$-values near the endpoints of $[1,n]$ whose modular inverses are in the middle of $[1,n]$ partially explain this outlying phenomena.
 
 Alternatively, we can try to explain this phenomena by trying to get more precise estimates on the terms in Theorem \ref{V(a,n)-for}. The graphs of $f_n$ indicate that we have not obtained sharp estimates for the error analysis in 
Corollary~\ref{error-V}. Using straightforward estimates, we have the asymptotic 
$$\sum_{n \leq x} \frac{\varphi(n)}{n} \approx \frac{x}{\zeta(2)} + O(\log(x)).$$
(See~\cite[exercise 5, page 70]{Ap}.)

With more careful analysis we believe that it is possible to get more refined estimates on the error term. In particular, whenever $a$ is sufficiently large and $a/n$ is not too close to a rational number $p/q$ with $p \ll n$, some preliminary heuristics suggest that the error term 
$$\frac{1}{n}\sum_{s=1}^{a}\sum_{d|s} \left(\left \langle\frac{(s-1)n}{ad}\right\rangle - \left \langle\frac{sn}{ad}\right\rangle\right) \mu(d) \ll 1.$$

More precisely, we assume that the set 
$$ \left \{ \left \langle \frac{kn}{a} \right \rangle ~|~ 1 \leq k \leq Q \right \}$$ 
is sufficiently ``equidistributed" on $[0,1]$ for $1 \ll Q \ll a$. Under this assumption, it is possible to use probabilistic estimates to show that the above expression is $O(n^{-1/2})$. Since our results currently rely on this equidistributional assumption, we postpone a more complete discussion for future work. In any case, we are led to the following (possibly overly optimistic) conjecture.

\begin{conjecture}
Let $0< \rho < 1$ be an irrational number with irrationality measure $2$. 
\[ \lim_{n\to \infty} \frac{V( \lceil{ \rho n \rceil}, n)}{n} = \frac{6}{\pi^2}.\]
\end{conjecture}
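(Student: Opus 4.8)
The plan is to start from the exact formula in Theorem~\ref{V(a,n)-for} with $a=\lceil\rho n\rceil$ and show that both the ``secondary'' term and the correction $2/n$ vanish in the limit, while the main term $\tfrac1a\sum_{s=1}^a\varphi(s)/s$ converges to $1/\zeta(2)=6/\pi^2$. The convergence of the main term is the easy part: by the classical estimate $\sum_{s\le x}\varphi(s)/s = x/\zeta(2)+O(\log x)$ (cited in the excerpt), one has $\tfrac1a\sum_{s=1}^a\varphi(s)/s = 1/\zeta(2)+O(\log a/a)\to 6/\pi^2$ as $n\to\infty$, since $a=\lceil\rho n\rceil\to\infty$. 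So everything reduces to controlling
\[
E(a,n)=\frac{1}{n}\sum_{s=1}^{a}\sum_{d\mid s}\left(\left\langle\frac{(s-1)n}{ad}\right\rangle-\left\langle\frac{sn}{ad}\right\rangle\right)\mu(d).
\]
Note Corollary~\ref{error-V} only gives $E(a,n)=O(a\log a/n)=O(\log n)$ here, which is useless; we need genuine cancellation, not the triangle inequality.

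The approach to $E(a,n)$ is to rewrite it via the substitution $s=dt$: collecting the terms with a fixed $d$ (squarefree, $d\le a$), the inner sum over $t\le a/d$ telescopes in a ``staircase'' fashion because consecutive fractional parts $\langle (dt-1)n/(ad)\rangle$ and $\langle dt\cdot n/(ad)\rangle=\langle tn/a\rangle$ share endpoints across neighbouring $t$. More precisely one should express $\sum_{s\equiv 0(d)}(\langle (s-1)n/ad\rangle-\langle sn/ad\rangle)$ as a sum of ``gaps'' of the sequence $\{\langle kn/a\rangle : k\le a/d\}$ together with a bounded number of ``wrap-around'' defects, each of size $O(1)$ plus a discrepancy term. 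This is exactly where the irrationality measure $2$ hypothesis enters: writing $n/a$ as an approximation to $\rho$ (or rather $a/n$ close to $\rho$), the fact that $\rho$ is badly approximable only by denominators $q$ with $q^{1+\epsilon}\gg$ (numerator) forces the sequence $\{kn/a \bmod 1\}$ to be well-distributed with discrepancy $D_Q = O(Q^{\epsilon}/ \;?\;)$ for ranges $Q\le a$, by the Erdős–Turán inequality applied to the exponential sums $\sum_{k\le Q} e(hkn/a)$, which are geometric and bounded by $\|hn/a\|^{-1}$. Summing the resulting discrepancy bounds over the $O(a/d)$-length ranges and over squarefree $d\le a$, and dividing by $n\asymp a$, should yield $E(a,n)=o(1)$ — and, under the sharper equidistribution heuristic mentioned in the text, even $O(n^{-1/2+\epsilon})$.

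The main obstacle is making the telescoping/discrepancy step uniform in $d$. For small $d$ the range $a/d$ is long and equidistribution is strong, but the modulus of the relevant linear form is $n/(ad)$, whose continued-fraction behaviour is not directly governed by the irrationality measure of $\rho$ alone; one has to track how well $a/n$ itself approximates $\rho$ for the \emph{specific} sequence $n$, and there is a genuine diophantine subtlety in that $a=\lceil\rho n\rceil$ is forced rather than free. A clean way around this is to choose a Farey/continued-fraction convergent $p/q$ of $\rho$ with $q\le a$, split the sum at $d\sim q$, handle $d\le q$ by the equidistribution of $\{kn/a\}$ (controlled by $q$ via the three-distance theorem, giving $O(1)$ gaps of three distinct sizes), and bound the tail $d>q$ trivially by $\sum_{d>q}a/d \ll a\log(a/q)$, which is $o(n)$ provided $q\to\infty$ — and $q\to\infty$ is guaranteed because $\rho$ is irrational. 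The irrationality-measure-$2$ assumption is what lets us take $q$ as large as a small power of $a$ while keeping the approximation error $|a/n-p/q|$ negligible, closing the estimate. I would expect the three-distance theorem, rather than Erdős–Turán, to give the most transparent bookkeeping here, and the delicate point will be the interaction between the two ``scales'' $q$ (diophantine quality of $\rho$) and $a/d$ (length of each subsum).
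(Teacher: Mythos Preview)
There is no proof in the paper to compare against: the statement is explicitly labeled a \emph{conjecture}, described by the authors as ``possibly overly optimistic,'' and the surrounding discussion says that their heuristics ``rely on this equidistributional assumption'' and that they ``postpone a more complete discussion for future work.'' So you are not reconstructing an argument the paper gives; you are attempting something the authors did not claim to have.

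Your outline tracks the paper's heuristics closely: split off the main term $\tfrac1a\sum_{s\le a}\varphi(s)/s\to 6/\pi^2$ (this part is fine and is exactly the classical estimate cited in the paper), and then argue that the secondary sum $E(a,n)$ is $o(1)$ by appealing to equidistribution of $\{kn/a\bmod 1\}$, with the irrationality measure of $\rho$ controlling the discrepancy. That is precisely the mechanism the paper gestures at, including the conjectured $O(n^{-1/2})$ size under an equidistribution hypothesis.

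However, as a proof your proposal has genuine gaps. First, the ``telescoping'' you invoke does not occur: after the substitution $s=dt$ the summand is $\langle t\alpha-\beta_d\rangle-\langle t\alpha\rangle$ with $\alpha=n/a$ and $\beta_d=n/(ad)$, which does not telescope across $t$; rather it equals $-\beta_d+\mathbf{1}[\langle t\alpha\rangle<\langle\beta_d\rangle]$ up to integer shifts, so the inner sum is a count of points of $\{t\alpha\bmod 1\}$ in an interval of length $\langle\beta_d\rangle$, minus $T\beta_d$. For small $d$ one has $\beta_d\ge 1$ and the bookkeeping changes. Second, and more seriously, the discrepancy input you need is for the \emph{rational} rotation $n/a$, not for $\rho$; the irrationality measure of $\rho$ constrains the continued fraction of $\rho$, but $a=\lceil\rho n\rceil$ gives only $|a/n-\rho|\le 1/n$, which does not by itself prevent $n/a$ from having a very large partial quotient at the relevant scale. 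Your Farey-splitting idea is in the right direction, but the claim that irrationality measure $2$ ``lets us take $q$ as large as a small power of $a$ while keeping the approximation error negligible'' is exactly the step that is unproven and is why the authors call the conjecture optimistic. In short: the strategy matches the paper's heuristic, but neither you nor the paper has closed the diophantine gap.
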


In this conjecture, the irrationality measure serves as an attempt to capture the sufficient genericity condition between $a$ and $n$, which quantitatively reflects that $a/n$ is not too close to a sequence of rational numbers with small denominator. Intuitively speaking, this conjecture seeks to formalize the observation that for ``most" pairs of $a$ and $n$, we expect that  
\[ \frac{V(a , n)}{n} \approx \frac{6}{\pi^2}. \]
Another possible way to make this observation rigorous is to consider the function  $f_n(a)$ and try to show that it converges (after renormalization) to $6/\pi^2 $ as $n$ goes to infinity. We know that it does not converge pointwise, but it may converge in $L^p$ or some other norm.

\smallskip

\noindent {\bf Acknowledgements:} We received some interesting feedback from Kevin Ford and Igor Shparlinski. In particular Igor told us how to obtain a stronger version of Corollary~\ref{error-V}.

The first author was partially supported by DARPA/ARO Grant W911NF-16-1-0383 (PI: Jun Zhang).

\end{document}